\newtheorem{theo}{Theorem}
\theoremstyle{definition}
\theoremstyle{remark}
\newtheorem{zau}[theo]{Remark}
\newcommand{\R}{\mathbb{R}}
\newcommand{\Expect}{\mathsf{E}}
\newcommand{\Prob}{\mathsf{P}}
\newcommand{\sas}{S$\alpha$S}
\newcommand{\norm}[1]{\left\lVert#1\right\rVert}
\newcommand{\ind}[1]{\mathbh1_{#1}}
\begin{document}
\begin{frontmatter}

\title{Stochastic wave equation in a plane driven by~spatial~stable~noise}

\author{\inits{L.}\fnm{Larysa}\snm{Pryhara}}\email{pruhara7@gmail.com}
\author{\inits{G.}\fnm{Georgiy}\snm{Shevchenko}\corref{cor1}}\email
{zhora@univ.kiev.ua}
\cortext[cor1]{Corresponding author.}
\address{Mechanics and Mathematics Faculty,\\
Taras Shevchenko National University of Kyiv,\\
Volodymyrska 64/11, 01601 Kyiv, Ukraine}


\markboth{L. Pryhara, G. Shevchenko}{Stochastic wave equation in a plane driven by spatial stable noise}

\begin{abstract}
The main object of this paper is the planar wave equation
\[
\bigg(\frac{\partial^{2}}{\partial t^2}- a^2 \varDelta\bigg)U(x,t)=f(x,t), \quad t\ge0,~x\in\R^2,
\]
with random source $f$. The latter is, in certain sense, a symmetric
$\alpha$-stable spatial white noise multiplied by some regular function
$\sigma$. We define a candidate solution $U$ to the equation via
Poisson's formula and prove that the corresponding expression is well
defined at each point almost surely, although the exceptional set may
depend on the particular point $(x,t)$. We further show that $U$ is H\"older
continuous in time but with probability 1 is unbounded in any
neighborhood of each point where $\sigma$ does not vanish. Finally, we
prove that $U$ is a generalized solution to the equation.
\end{abstract}

\begin{keyword}
Stochastic partial differential equation\sep
wave equation\sep
LePage series\sep
stable random measure\sep
H\"older continuity\sep
generalized solution
\MSC[2010] 60H15\sep
35L05\sep
35R60\sep
60G52
\end{keyword}


%
\received{14 October 2016}
%
\revised{20 October 2016}
%
\accepted{20 October 2016}
\publishedonline{8 November 2016}

\end{frontmatter}

\section*{Introduction}

Stochastic partial differential equations are widely used in modeling
different phenomena involving randomness, and the area of their
application is constantly increasing. This is reflected by increasing
number of works devoted to such equations. Vast majority of these
articles is devoted to the case where the underlying noise is Gaussian.
In particular, a stochastic wave equation with Gaussian noise was
studied in \cite
{balan-tudor-wave,dalang-frangos,dalang-sanz,millet-morien,quer-tindel,walsh},
to mention only a few authors. However, many phenomena are
characterized by heavy tails of the corresponding distributions; often
not only variances, but also expectations of underlying random
variables are infinite. In such cases, the underlying random noise is
better modeled by a stable distribution.

In this paper, we study a wave equation in the plane, where the random
source has a stable distribution. We prove that a candidate solution to
the equation, constructed by means of Poisson's formula, is a
generalized solution. We also show that it is H\"older continuous is
time variable, but it is irregular in the spatial variable.

The paper is organized as follows. Section~\ref{sec:prelim} contains
the notation and auxiliary information on objects involved. In
Section~\ref{sec:main}, we introduce the main object of the paper, a
planar wave equation with stable noise, and establish main results. The
existence and spatial properties of a candidate solution to the
equation, constructed via Poisson's formula, are studied in
Section~\ref{subsec:existence}. In Section~\ref{subsec:generalized}, we prove that the candidate solution is a
generalized solution to the equation. Finally, in Section~\ref{subsec:holder}, we establish the H\"older regularity of the solution
in the time variable.

\section{Preliminaries}\label{sec:prelim}
\subsection{Notational conventions}
Throughout the article, the symbol $C$ denotes a generic constant, the
exact value of which is not important and may change from line to line.
Similarly, $C(\omega)$ is be used to denote a generic a.s.\ finite
random variable. We use the notation $|x|$ both for the absolute value
of a real number and for the Euclidean norm of a vector; the particular
meaning will be always clear from the context. The Euclidean ball $\{
y:|y-x|\le r\}$ is denoted by $B(x,r)$. Finally, $\R_+ = [0,+\infty)$.

\subsection{Stable random variables and measures}

In this section, we give essential information on symmetric $\alpha
$-stable \sas\ random variables and measures; for details, we refer the
reader to \cite{samor-taqqu}.

Let $(\varOmega,\mathcal F,\Prob)$ be a complete probability
space. For a number $\alpha\in(0,2)$, called the stability parameter, a
random variable $\xi$ is \sas\ with the scale parameter $\sigma^{\alpha
}$, $\sigma\ge0$, if its characteristic function is
\[
\Expect\big[e^{i\lambda\xi}\big]=e^{-\sigma^\alpha\left|\lambda
\right|^{\alpha}}.
\]
We also denote $\norm{\xi}_\alpha= \sigma$; note that this is a
(quasi-)norm for $\alpha\ge1$.

\sas\ random variables and fields are often constructed by means of
an~independently scattered \sas\ random measure, which is defined as
follows. Denoting by $\mathcal B_f(\R^d)$ the family of Borel sets of
finite Lebesgue measure, a random set function $M\colon\mathcal B_f(\R
^d)\times\varOmega\to\R$ is called an independently scattered \sas\
random measure with Lebesgue control measure if
\begin{enumerate}[1)]
\item for any $A\in\mathcal B_f(\R^d)$, the random variable $M(A)$ is
\sas\ with scale parameter equal to $\lambda_d(A)$, the Lebesgue
measure of $A$;
\item for any disjoint $A_1, \dots,A_n \in\mathcal B_f(\R^d)$, the
values $M(A_1),\dots,M(A_n)$ are independent.
\item for any disjoint $A_n \in\mathcal B_f(\R^d)$, $n\ge1$, such
that $A = \bigcup_{n=1}^\infty A_n\in\mathcal B_f(\R^d)$,
\[
M(A) = \sum_{n=1}^\infty M(A_n)
\]
almost surely.
\end{enumerate}
For a function $f (x) \in L^\alpha(\R^d)$, the integral
\[
I(f)=\idotsint_{\R^d}f(x)M(dx)
\]
is defined as the limit in probability of integrals of simple compactly
supported functions; its value is an \sas\ random variable with
\[
\big\|I(f)\big\|^\alpha_\alpha=\idotsint_{\R^d}\big|f(x)\big|^\alpha\, dx.
\]

Our analysis is based on the LePage series representation of $M$
defined as follows. Let $\varphi$ be an arbitrary continuous positive
probability density function on $\R^d$, and $\{\varGamma_{k},k\geq1\}, \{
\xi_{k},k\geq1\}, \{g_k,k\geq1\}$ be three independent families of
random variables satisfying:
\begin{itemize}
\item${\varGamma_{k},k\geq1}$, is a sequence of arrivals of a Poisson
process with unit intensity;
\item${\xi_{k},k\geq1}$, are independent random vectors in $\R^d$ with
density $\varphi$;
\item${g_k,k\geq1}$, are independent centered Gaussian variables with
$\Expect[|g_k|^\alpha]=1$.
\end{itemize}

Then $M$ (as a random process indexed by finite measure Borel sets) has
the same finite-dimensional distributions as
\begin{equation}\label{eqLP}
M'(A)=C_\alpha\sum_{k\geq1}\varGamma_k^{-{1}/{\alpha}}\varphi(\xi
_k)^{-1/\alpha} \ind{A}(\xi_k)g_k,
\end{equation}
where $C_\alpha=\big(\frac{\varGamma(2-\alpha)\cos\frac{\pi\alpha
}{2}}{1-\alpha}\big)^{1/\alpha}$; the corresponding series converges
almost surely for any Borel set $A\subset\R^d$ of finite Lebesgue
measure. Moreover, for any $f_1,f_2,\dots,\break f_n \in L^\alpha(\R^d)$,
the vector $(I(f_1), I(f_2), \dots, I(f_n))$ has the same
distribution as $(I'(f_1), I'(f_2), \dots, I'(f_n))$, where
\begin{equation}\label{eqLPI}
I'(f)=C_\alpha\sum_{k\geq1}\varGamma_k^{-{1}/{\alpha}}\varphi(\xi
_k)^{-1/\alpha} f(\xi_k)g_k.
\end{equation}
Throughout the paper, we work\vadjust{\eject} with a planar \sas\ measure $M$, that
is,\ we consider the case $d=2$. We will assume, without loss of
generality, that $M$ is given by \eqref{eqLP}, so that, for any
function $f \in L^\alpha(\R^2)$, the integral
\[
I(f)=\iint_{\R^2}f(x)M(dx)
\]
is given by an almost surely convergent series \eqref{eqLPI}. Moreover,
we assume that
\[
\left(\varOmega, \mathcal F, \Prob\right)=(\varOmega_{\varGamma}\otimes
\varOmega_{\xi} \otimes\varOmega_{g}, \mathcal F_{\varGamma} \otimes\mathcal
F_{\xi} \otimes\mathcal F_{g}, \Prob_{\varGamma}\otimes\Prob_{\xi}
\otimes\Prob_{g} )
\]
and, for all $\omega= (\omega_{\varGamma},\omega_{\xi}, \omega {g})$
and
$k\geq1$,
$\varGamma_{k}(\omega)=\varGamma_{k}(\omega_{\varGamma})$,
$\xi_{k}(\omega)=\xi_{k}(\omega_{\xi})$,
and
$g_{k}(\omega)=g_{k}(\omega_{g})$.
This will not harm the generality but will considerably simplify our exposition.

\section{Main results}\label{sec:main}

For a positive constant $a>0$, consider the planar wave equation
\begin{equation}\label{eqR}
\bigg(\frac{\partial^{2}}{\partial t^2}- a^2 \varDelta\bigg)U
(x,t)=\sigma(x,t)\dot M(x)
\end{equation}
with zero initial conditions. The random source is a product of a
continuous function $\sigma$ and \sas\ white noise $\dot M(x)$, which
is a formal derivative of a planar \sas\ random measure $M$ introduced
in the previous section. The precise meaning of this equality is not
immediately obvious. Clearly, there can be no classical (belonging to
$C^2(\R^2\times\R_+)$ solution to this equation, so we will look at
generalized solutions.

Let $\mathcal D(\R^2\times\R_+)$ denote the class of all compactly
supported infinitely continuously differentiable functions on $\R
^2\times\R_+$. By a generalized solution we mean a function satisfying
\begin{align}
&\int_0^{\infty}\iint_{\R^2}U(x,t)\bigg(\frac{\partial^2}{\partial t^2}\theta(x,t)-a^2\varDelta\theta(x,t)\bigg)\,dx\,dt\nonumber\\
&\quad =\int_0^{\infty}\iint_{\R^2}\theta(x,t)\sigma(x,t)M(dx)\,dt\label{uv}
\end{align}
for all $\theta\in\mathcal D(\R^2\times\R_+)$.

Our approach is to consider a candidate solution given by Poisson's formula
\begin{align}
U(x,t)&=U(x_1,x_2,t)\nonumber\\
&=\frac{1}{2\pi a}\int_0^t\iint_{B(x,a(t-\tau))}\frac{\sigma(y_1,y_2,\tau)M(dy_1,dy_2)\,d\tau}{\sqrt{a^2(t-\tau)^2-(y_1-x_1)^2-(y_2-x_2)^2}}\nonumber\\
&= \frac{1}{2\pi a}\int_0^t\iint_{B(x,a(t-\tau))}\frac{\sigma(y,\tau)M(dy)\,d\tau}{\sqrt{a^2(t-\tau)^2-|y-x|^2}}\label{eqd2a}
\end{align}
and later, in Section~\ref{subsec:generalized}, to show that it
solves \xch{Eq.}{equation}~\eqref{eqR} in a generalized
sense.\vadjust{\eject}

The integral in \eqref{eqd2a} is understood in the following sense: we define
\[
G(x,y,t)=\frac{1}{2\pi a}\int_0^{t-\frac{\left|x-y\right|}{a}}\frac
{\sigma(y,\tau)}{\sqrt{a^2(t-\tau)^2-|y-x|^2}}\,d\tau\,\ind{|x-y|<at}
\]
and set
\begin{equation}\label{eq-U}
U(x,t) = \iint_{\R^2}G(x,y,t)M(dy).
\end{equation}

In what follows, we need some assumptions about the coefficient $\sigma$.
\begin{enumerate}[(S1)]
\item Boundedness:  $|\sigma(x,t)|\le C$ for all $t\ge0$ and $x\in\R^2$.
\item Continuity: $\sigma\in C(\R^2\times\R_+)$.
\item H\"older continuity in time: there exists $\gamma\in(0,1]$ such
that, for all $t,s\ge0$ and $x\in\R^2$,
\[
\big|\sigma(x,t) - \sigma(x,s)\big|\le|t-s|^\gamma.
\]
\end{enumerate}

\subsection{Existence and spatial properties of a candidate
solution}\label{subsec:existence}
First, we establish a result on the existence of the integral defining
the candidate solution $U(x,t)$.
\begin{theo}\label{ex1}
Under {\rm(S1)}, for any $t\ge0$ and $x\in\R^2$, the integral in
\eqref{eq-U} is well defined.
\end{theo}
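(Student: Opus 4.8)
The plan is to reduce the statement to a deterministic $L^\alpha$ integrability check. The excerpt already guarantees that the integral $I(f)=\iint_{\R^2}f(y)\,M(dy)$ is well defined (as an \sas\ random variable) whenever $f\in L^\alpha(\R^2)$. Hence it suffices to fix $t\ge0$ and $x\in\R^2$ and show that the kernel $y\mapsto G(x,y,t)$ lies in $L^\alpha(\R^2)$, i.e.
\[
\iint_{\R^2}\big|G(x,y,t)\big|^\alpha\,dy<\infty.
\]
The case $t=0$ is trivial, since then the factor $\ind{|x-y|<at}$ forces $G\equiv0$; so I assume $t>0$.

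First I would invoke assumption (S1) to strip $\sigma$ out of the kernel. Writing $r=|x-y|$, for $r<at$ one has
\[
\big|G(x,y,t)\big|\le\frac{C}{2\pi a}\int_0^{t-r/a}\frac{d\tau}{\sqrt{a^2(t-\tau)^2-r^2}}.
\]
The inner integral is elementary: substituting $u=t-\tau$ and using the primitive $\int\frac{du}{\sqrt{a^2u^2-r^2}}=\frac1a\ln\big(au+\sqrt{a^2u^2-r^2}\big)$, it evaluates to $\frac1a\ln\big((at+\sqrt{a^2t^2-r^2})/r\big)$. This produces the clean pointwise bound
\[
\big|G(x,y,t)\big|\le\frac{C}{2\pi a^2}\,\ln\!\bigg(\frac{at+\sqrt{a^2t^2-r^2}}{r}\bigg)\,\ind{r<at},
\]
depending on $y$ only through $r=|x-y|$.

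Next I would pass to polar coordinates centred at $x$, which reduces the $L^\alpha$ norm (up to a constant) to the one-dimensional integral $\int_0^{at}r\,[\ln((at+\sqrt{a^2t^2-r^2})/r)]^\alpha\,dr$. It then remains only to inspect the two endpoints. Near $r=at$ the argument of the logarithm tends to $1$, so the logarithm vanishes and the integrand is bounded (indeed $\to0$). Near $r=0$ the logarithm behaves like $\ln(2at/r)\sim\log(1/r)$, a mild singularity that is absorbed by the polar weight, since $\int_0 r\,(\log(1/r))^\alpha\,dr<\infty$ for every $\alpha>0$. Thus the integral is finite, $G(x,\cdot,t)\in L^\alpha(\R^2)$, and the claim follows.

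The computation is essentially one-dimensional, so I do not expect a genuinely hard step; the only delicate point is accounting for the two singular sets of the kernel — the light cone $r=at$, where the square root in the denominator blows up but the logarithm conveniently vanishes, and the tip $r=0$, where $G$ itself has a logarithmic singularity that must be shown integrable against $r\,dr$. Verifying that the polar factor $r$ tames the logarithm at the tip is the crux of the argument.
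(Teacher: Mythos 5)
Your proposal is correct and takes essentially the same route as the paper's proof: use (S1) to replace $\sigma$ by a constant, evaluate the $\tau$-integral to get the logarithmic bound $\frac{C}{a}\ln\bigl((at+\sqrt{a^2t^2-|x-y|^2})/|x-y|\bigr)\ind{|x-y|<at}$, then pass to polar coordinates and verify that $\int_0 r\,(\ln(1/r))^{\alpha}\,dr<\infty$. The paper differs only cosmetically, rescaling the radial variable to $[0,1]$ and dominating the logarithm by $r^{-\varepsilon}$ instead of integrating it directly, so your argument has no gap.
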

\begin{proof}
According to the definition of the integral with respect to $M$, the
integral is well defined, provided that
\begin{equation}\label{eqd2u}
\iint_{\R^2}\big|G(x,y,t)\big|^{\alpha}\,dy <\infty.
\end{equation}
Taking into account (S1), we have the estimate
\begin{align*}
\big|G(x,y,t)\big|&=\frac{1}{2\pi a}\bigg|\int_0^{t-\frac{\left|x-y\right|}{a}}\frac{\sigma(y,\tau)}{\sqrt{a^2(t-\tau)^2-|y-x|^2}}\,d\tau\bigg|\,\ind{|x-y|<at}\\
&\leq C\int_0^{t-\frac{\left|x-y\right|}{a}}\frac{d\tau}{\sqrt{a^2(t-\tau)^2-|y-x|^2}}\,\ind{|x-y|<at}\\
&=\frac{C}{a}\ln\bigg(\frac{at}{\left|x-y\right|}+\sqrt{\frac{a^2t^2}{\left|x-y\right|^2}-1}\bigg)\,\ind{|x-y|<at}.
\end{align*}
Therefore,
\begin{align*}
&\iint_{\R^2}\big|G(x,y,t)\big|^{\alpha}\,dy 
\le C \iint_{B(x,at)}\bigg|\ln\bigg(\frac{at}{\left|x-y\right|}+\sqrt
{\frac{a^2t^2}{\left|x-y\right|^2}-1}\bigg)\bigg|^{\alpha}\,dy\\
&\quad =
\begin{vmatrix}
y_1=x_1+at\,r\cos\phi\\
y_2=x_2+at\, r\sin\phi \\ %
\end{vmatrix}
= C\int_0^{2\pi}d\phi\int_0^1r\big(\ln\big|r^{-1}+\sqrt{r^{-2}-1}\big|\big)^{\alpha}\,dr\\
&\quad \le C\int_0^1r\big|\ln\big(2r^{-1}\big)\big|^{\alpha}\,dr\le C\int_0^1 r^{1-\varepsilon}\, dr<\infty,
\end{align*}
where $\varepsilon$ is a small positive number. This proves the statement.
\end{proof}

Recall that $M$ is assumed to coincide with its LePage series, so we
have that, for all $t\ge0$ and $x\in\R^2$, $U(x,t)$ is given by the
almost surely convergent series
\[
U(x,t)=\sum_{k\geq1}\varGamma_k^{-{1}/{\alpha}}\varphi(\xi_k)^{-{1}/{\alpha
}}G(t,x,\xi_k)g_k.
\]

We will further see that the exceptional event of zero probability
generally depends on $x$ and $t$. Moreover, if $\sigma$ is continuous,
then $U$ is unbounded in any neighborhood of any point where $\sigma$
does not vanish. In order to prove this, we first note that $G(x,x,t)$
is infinite for any $t\ge0$ and $x\in\R^2$ such that $\sigma(x,t)\neq
0$. Indeed, let $\sigma(x,t) >0$ for some $t\ge0$ and $x\in\R^2$. Then
there is $\varepsilon>0$ such that $\sigma(x,s) >\varepsilon$ for all
$s\in[t-\varepsilon,t]$. Write
\[
G(x,x,t)=\int_0^{t-\varepsilon}\frac{\sigma(x,\tau)}{a(t-\tau)}\,d\tau
+\int_{t-\varepsilon}^t\frac{\sigma(x,\tau)}{a(t-\tau)}\,d\tau.
\]
The first integral is finite, whereas
\[
\int_{t-\varepsilon}^t\frac{\sigma(x,\tau)}{a(t-\tau)}\,d\tau\geq
\varepsilon\int_{t-\varepsilon}^t\frac{d\tau}{a(t-\tau)}=+\infty.
\]
This observation leads to the following statement.

\begin{theo}\label{teox}
Assume {\rm(S1)} and {\rm(S2)}. Then, for all $t\geq0$ and $x\in\R
^{2}$ such that $\sigma(t,x)\neq0$ and for all $\delta>0$,
\[
\sup_{y\in B(x,\delta)}
\big|U(y,t)\big|=+\infty
\]
almost surely.
\end{theo}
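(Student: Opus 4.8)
The plan is to exhibit, inside $B(x,\delta)$, a single random point at which $U$ is already infinite, namely one of the Poisson points $\xi_k$ of the LePage series, exploiting the observation made just before the theorem that $G(z,z,t)=+\infty$ whenever $\sigma(z,t)\neq0$. First I would invoke continuity (S2): since $\sigma(x,t)\neq0$, there are $\rho\in(0,\delta]$ and $\varepsilon>0$ with $|\sigma(z,t)|\ge\varepsilon$ for all $z\in B(x,\rho)$. Because the $\xi_k$ are i.i.d.\ with the strictly positive density $\varphi$, the probability that $\xi_k\in B(x,\rho)$ equals some $p>0$ for every $k$, so independence and the second Borel--Cantelli lemma give that a.s.\ infinitely many indices $k$ satisfy $\xi_k\in B(x,\rho)$.

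Fix such an index $k$. Evaluating the defining series at the random point $y=\xi_k$, I split
\[
U(\xi_k,t)=\varGamma_k^{-1/\alpha}\varphi(\xi_k)^{-1/\alpha}G(\xi_k,\xi_k,t)\,g_k+R_k(\xi_k),\qquad R_k(y):=\sum_{j\neq k}\varGamma_j^{-1/\alpha}\varphi(\xi_j)^{-1/\alpha}G(y,\xi_j,t)\,g_j .
\]
On the event $\{\xi_k\in B(x,\rho)\}$ we have $\sigma(\xi_k,t)\neq0$, so that observation yields $G(\xi_k,\xi_k,t)=+\infty$; since $g_k\neq0$ and $\varGamma_k^{-1/\alpha}\varphi(\xi_k)^{-1/\alpha}>0$ a.s., the distinguished term is $\pm\infty$. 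Thus $|U(\xi_k,t)|=+\infty$, and hence $\sup_{y\in B(x,\delta)}|U(y,t)|=+\infty$, provided I can show that the remainder $R_k(\xi_k)$ is finite, so that no $\infty-\infty$ cancellation spoils the argument.

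Establishing finiteness of $R_k$ at the random source point $\xi_k$ is the crux. The key point is that the random function $y\mapsto R_k(y)$ does not involve $\xi_k$ at all, hence is independent of $\xi_k$. For each fixed deterministic $z$ the value $R_k(z)$ is finite a.s.: the full series $\sum_j\varGamma_j^{-1/\alpha}\varphi(\xi_j)^{-1/\alpha}G(z,\xi_j,t)g_j=U(z,t)$ converges a.s.\ by Theorem~\ref{ex1} and the LePage representation, while the removed $k$-th term $\varGamma_k^{-1/\alpha}\varphi(\xi_k)^{-1/\alpha}G(z,\xi_k,t)g_k$ is finite a.s.\ because $\Prob(\xi_k=z)=0$. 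Setting $N_k:=\{z:R_k(z)\text{ is not finite}\}$, Fubini gives $\Expect[\lambda_2(N_k)]=\int_{\R^2}\Prob(R_k(z)\text{ not finite})\,dz=0$, so $\lambda_2(N_k)=0$ a.s. Since $N_k$ is a function of $(\xi_j)_{j\neq k}$, $(g_j)_{j\neq k}$ and the arrivals, it is independent of $\xi_k$, whence $\Prob(\xi_k\in N_k)=\Expect\big[\int_{N_k}\varphi(z)\,dz\big]=0$. Therefore $R_k(\xi_k)$ is finite a.s.

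Finally I would assemble the pieces. For each fixed $k$ the events $\{R_k(\xi_k)\text{ finite}\}$ and $\{g_k\neq0\}$ have probability one, so a.s.\ they hold for every $k$ simultaneously; intersecting this with the a.s.\ event that $\xi_k\in B(x,\rho)$ for infinitely many $k$ produces, a.s., at least one index $k$ with $\xi_k\in B(x,\rho)$, $g_k\neq0$ and $R_k(\xi_k)$ finite, and for that index $|U(\xi_k,t)|=+\infty$. As $\xi_k\in B(x,\rho)\subseteq B(x,\delta)$, this forces $\sup_{y\in B(x,\delta)}|U(y,t)|=+\infty$. I expect the genuine difficulty to lie entirely in the third paragraph: giving rigorous meaning to the value of $U$ at the random singular point $\xi_k$ and proving that the remainder does not also blow up there. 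The independence of $R_k$ from $\xi_k$, combined with the a.e.-finiteness obtained via Fubini and the fact that $\xi_k$ has a density, is what makes the evaluation legitimate.
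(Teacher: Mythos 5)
Your proof is correct, but it takes a genuinely different route from the paper's. The paper also starts from the a.s.\ event that some $\xi_k$ falls into the ball (where, by (S2), $\sigma(\cdot,t)$ does not vanish after shrinking the radius), but it never evaluates the series at the random point $\xi_k$. Instead it exploits the product structure of the probability space: conditionally on $\omega_\xi$ and $\omega_\varGamma$, the field $y\mapsto U(y,t)$ is centered Gaussian under $\Prob_g$, so by the Gaussian 0--1 law $\Prob_g(\sup_{B(x,\delta)}|U(y,t)|<\infty)\in\{0,1\}$; if this probability were $1$, Fernique's theorem would force $\Expect_g[\sup_{B(x,\delta)}|U(y,t)|^2]<\infty$, which is contradicted by the single term $C_\alpha^2\varGamma_k^{-2/\alpha}\varphi(\xi_k)^{-2/\alpha}G(\xi_k,\xi_k,t)^2=+\infty$ sitting inside the conditional variance $\Expect_g[U(\xi_k,t)^2]$. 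The essential difference is how each argument disposes of possible cancellation: the paper works with the conditional variance, a sum of \emph{nonnegative} terms, so one infinite term settles the matter and nothing like your remainder $R_k$ ever needs to be controlled; you work with the series itself, so you must prove $R_k(\xi_k)$ is a.s.\ finite, which you correctly identify as the crux and handle via the independence of $R_k(\cdot)$ from $\xi_k$, Fubini--Tonelli, and the pointwise a.s.\ convergence guaranteed by Theorem~\ref{ex1} (the divergence set $N_k$ is jointly measurable, $\lambda_2(N_k)=0$ a.s., hence $\Prob(\xi_k\in N_k)=0$). What each approach buys: the paper's is shorter and avoids giving meaning to the series at a random singular point, at the price of invoking the 0--1 law and Fernique's theorem; yours is more elementary (Borel--Cantelli, Fubini, independence) and more constructive, exhibiting an explicit random point $\xi_k\in B(x,\delta)$ at which $|U(\xi_k,t)|=+\infty$ rather than only showing that a.s.\ boundedness fails. (Two cosmetic points: you drop the factor $C_\alpha$ in the decomposition, and only one index $k$ with $\xi_k\in B(x,\rho)$ is needed, so the second Borel--Cantelli lemma is a luxury; neither affects correctness.)
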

\begin{proof}
Define
\[
\widetilde\varOmega_\xi=\big\{\omega_\xi\in\varOmega_\xi\mid\exists k\geq1:
\big|\xi_k(\omega_\xi)-x\big|\le\delta\big\}.
\]
Since $\{\xi_k, k\ge1\}$ are iid with everywhere positive density, it
is clear that $\Prob_\xi(\widetilde\varOmega_\xi)=1$. Fix some $\omega
_{\xi}\in\widetilde\varOmega_\xi$ and $\omega_{\varGamma} \in\varOmega_{\varGamma
}$. Then $U(x,t)$ has a centered Gaussian distribution, so that by the
0--1 law for Gaussian measures
\[
\Prob_g\Big(\sup_{y\in B(x,\delta)}
\big|U(y,t)\big|<+\infty\Big)\in\left\{0,1\right\}.
\]
Suppose by contradiction that
\[
\Prob_g\Big(\sup_{y\in B(x,\delta)}
\big|U(y,t)\big|<+\infty\Big)=1.
\]
Then by Fernique's theorem
\begin{equation}\label{eq-U-finvar}
\Expect_g\Big[\sup_{y\in B(x,\delta)}
\big|U(y,t)\big|^2\Big]<\infty.
\end{equation}
On the other hand,
\begin{align*}
\Expect_g\Big[\sup_{y\in B(x,\delta)}\big|U(y,t)\big|^2\Big]&\geq\sup_{y\in B(x,\delta)} \Expect_g \big[\big|U(y,t)\big|^2\big]\\
&=C_\alpha^2\sup_{y\in B(x,\delta)}\sum_{k\geq1}\varGamma_k^{-2/\alpha}\varphi(\xi_k)^{-{2}/{\alpha}}\big|G(y,\xi_k,t)\big|^2\\
&\geq C_\alpha^2\varGamma_{k(\omega_\xi)}^{-2/\alpha}\varphi(\xi_{k(\omega_\xi)})^{-{2}/{\alpha}}\sup_{y\in B(x,\delta)} G(y,\xi_{k(\omega_\xi)},t)^2,
\end{align*}
where $k(\omega_\xi)$ is an integer such that $|\xi_{k(\omega_\xi
)}-x|<\delta$ (it exists since $\omega_{\xi}\in\widetilde\varOmega
_\xi$).

Since $\sigma$ is continuous, there exists $\varepsilon>0$ such that,
for all $y\in\R^2$ with $|x-y|<\varepsilon$, $\sigma
(y,t)\neq0$. Without loss of generality, we can assume that
$\varepsilon\geq\delta$. Taking into account that $\xi_{k(\omega_\xi
)}\in B(x,\delta)$, we get
\[
\Expect_g\Big[\sup_{y\in B(x,\delta)}
\big|U(y,t)\big|^2\Big]\geq C_\alpha^2 \varGamma_{k(\omega_\xi
)}^{-2/\alpha}\varphi(\xi_{k(\omega_\xi)})^{-2/\alpha}G(\xi
_{k(\omega_\xi)},\xi_{k(\omega_\xi)},t)^2.
\]
However, since $\sigma(\xi_{k(\omega_\xi)},t)\neq0$, the observation
preceding the theorem yields\break $G(\xi_{k(\omega_\xi)},\xi_{k(\omega
_\xi)},t)^2=+\infty$, which contradicts \eqref{eq-U-finvar}.

Consequently, for all $\omega_{\xi}\in\widetilde\varOmega_\xi$ and
$\omega_{\varGamma} \in\varOmega_{\varGamma}$,
\[
\Prob_g\Big(\sup_{y\in B(x,\delta)}
\big|U(y,t)\big|<+\infty\Big)=0,
\]
whence
\begin{align*}
&\Prob\Big(\sup_{y\in B(x,\delta)}\big|U(y,t)\big|<+\infty\Big)\\
&\quad =\int_{\varOmega_{\varGamma}}\int_{\varOmega_{\xi}}\Prob_g\Big(\sup_{y\in B(x,\delta)}\big|U(y,t)\big|<+\infty\Big)\,d\Prob_\xi(\omega_\xi)\,d\Prob_\varGamma(\omega_\varGamma)=0,
\end{align*}
as claimed.
\end{proof}

\subsection{Generalized solution}\label{subsec:generalized}
Theorem \ref{teox} shows that the function $U(x,t)$ cannot be a
classical solution to Eq.~\eqref{eqR}. Our next aim is to show that it
solves \eqref{eqR} in a generalized sense.

\begin{theo}\label{ur} Assume {\rm(S1)} and {\rm(S2)}.
1. If $\alpha\in(0,1)$, then there is $\varOmega_0\in\mathcal F$, ${\Prob
(\varOmega_0)=1}$, such that, for all $\omega\in\varOmega_0$ and $\theta\in
\mathcal D(\R^2\times\R_+)$, Eq.~\eqref{uv} holds.
2. If $\alpha\in[1,2)$, then, for any $\theta\in\mathcal D(\R
^2\times\R_+)$, Eq.~\eqref{uv} holds almost surely.
\end{theo}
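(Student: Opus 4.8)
The plan is to reduce \eqref{uv} to a deterministic Green-function identity for each fixed source point $y$, and then to move the integration with respect to $M$ through the space-time integral by a Fubini-type argument whose form depends on $\alpha$.

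First I would fix $y\in\R^2$ and establish the deterministic identity
\[
J(y) := \int_0^\infty\iint_{\R^2}G(x,y,t)\Big(\frac{\partial^2}{\partial t^2}\theta - a^2\varDelta\theta\Big)(x,t)\,dx\,dt = \int_0^\infty\sigma(y,t)\theta(y,t)\,dt.
\]
The key observation is that, writing $E(z,t)=\frac{1}{2\pi a}\,\ind{|z|<at}\big(a^2t^2-|z|^2\big)^{-1/2}$ for the retarded fundamental solution of the planar wave operator, one has $G(x,y,t)=\int_0^t\sigma(y,\tau)E(x-y,t-\tau)\,d\tau$, so $G(\cdot,y,\cdot)$ is exactly the Poisson solution of $(\partial_t^2-a^2\varDelta)u=\sigma(y,\cdot)\,\delta_y$ with zero Cauchy data. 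Testing against $\theta$ and integrating by parts twice in space and time — the boundary terms at $t=0$ vanishing because $G(\cdot,y,\cdot)$ and $\partial_t G(\cdot,y,\cdot)$ vanish there, and those at $t=\infty$ because $\theta$ has compact support — yields $J(y)=\langle\sigma(y,\cdot)\delta_y,\theta\rangle$, which is the stated right-hand side. Since $E$ is singular on the light cone, I would justify the integration by parts either by mollifying $\delta_y$ and passing to the limit in $L^1_{\mathrm{loc}}$ (using that $G$ is locally integrable in $(x,t)$, exactly as in the proof of Theorem~\ref{ex1}) or by a direct computation in polar coordinates.

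Granting $J(y)=\int_0^\infty\sigma(y,t)\theta(y,t)\,dt$, the identity \eqref{uv} becomes
\[
\int_0^\infty\iint_{\R^2}\Big(\iint_{\R^2}G(x,y,t)\,M(dy)\Big)\Big(\frac{\partial^2}{\partial t^2}\theta-a^2\varDelta\theta\Big)dx\,dt = \iint_{\R^2}J(y)\,M(dy),
\]
whose right side coincides with the right side of \eqref{uv} after an elementary interchange of $\int_0^\infty dt$ with $\iint M(dy)$ (legitimate since $\sigma\theta$ is bounded with compact support). For $\alpha\in(0,1)$ I would argue pathwise: in the representation $U(x,t)=C_\alpha\sum_k\varGamma_k^{-1/\alpha}\varphi(\xi_k)^{-1/\alpha}G(x,\xi_k,t)g_k$, the series of absolute values converges after integration against the compactly supported $\Psi=\partial_t^2\theta-a^2\varDelta\theta$, because $\iint_{\mathrm{supp}\,\theta}|G(x,\xi_k,t)|\,dx\,dt$ is bounded uniformly in $\xi_k$ and vanishes for large $|\xi_k|$ by causality, $\varphi$ is bounded below on the relevant compact, and $\sum_k\varGamma_k^{-1/\alpha}|g_k|<\infty$ a.s. since $1/\alpha>1$. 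Taking the full-measure event on which $\sum_k\varGamma_k^{-1/\alpha}\varphi(\xi_k)^{-1/\alpha}|g_k|\,\ind{|\xi_k|\le N}<\infty$ for every $N$ gives a single $\varOmega_0$ on which Fubini for series applies simultaneously for all $\theta$, proving part~1.

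The main obstacle is the case $\alpha\in[1,2)$, where $\sum_k\varGamma_k^{-1/\alpha}$ diverges and the pathwise argument breaks down. Here I would invoke a stochastic Fubini theorem for $\alpha$-stable integrals \cite{samor-taqqu}, whose hypothesis is the mixed-norm bound
\[
\int_0^\infty\iint_{\R^2}\Big(\iint_{\R^2}|G(x,y,t)|^\alpha\,dy\Big)^{1/\alpha}|\Psi(x,t)|\,dx\,dt<\infty.
\]
This follows from the estimate behind Theorem~\ref{ex1} (which gives $\iint_{\R^2}|G(x,y,t)|^\alpha dy\le C$ uniformly for $(x,t)$ in the compact support of $\theta$) together with the compact support of $\Psi$, and it delivers \eqref{uv} almost surely for the given $\theta$. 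Because this stochastic Fubini yields only equality in probability and its exceptional null set depends on the integrand, the argument cannot be made uniform over all test functions in this regime, which is precisely why part~2 asserts ``for each $\theta$, almost surely'' rather than a common $\varOmega_0$.
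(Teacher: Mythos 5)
Your proposal is correct in substance, and its core deterministic ingredient is the same as the paper's: your identity $J(y)=\int_0^\infty\sigma(y,t)\theta(y,t)\,dt$ is exactly the termwise identity \eqref{psi} to which the paper also reduces \eqref{uv}, and your part~1 argument (SLLN bound $\varGamma_k\ge C(\omega)k$, logarithmic growth of $|g_k|$, the uniform kernel bound as in \eqref{k}, causality to get a single $\varOmega_0$ for all $\theta$) is the paper's argument almost verbatim. The differences are two. First, for the identity itself the paper avoids distributional calculus and mollification entirely: it time-reverses the test function, $\widetilde\theta(x,u)=\theta(x,R-u)$, notes that $\widetilde\theta$ trivially solves the Cauchy problem with source $(\partial_t^2-a^2\varDelta)\widetilde\theta$ and zero initial data, and then quotes the classical Poisson representation of that solution; your integration-by-parts route against the retarded kernel is workable but places the technical burden on the limiting/mollification step you only sketch. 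Second, and more substantially, for $\alpha\in[1,2)$ the paper never leaves the LePage series: conditionally on $(\omega_\xi,\omega_\varGamma)$ the series $L(\theta)$ is a sum of independent centered Gaussians whose variances are $O(k^{-2/\alpha})$ by \eqref{k}, hence summable, so Kolmogorov's convergence theorem gives a.s.\ convergence, and \eqref{uv} is verified on partial sums; this is elementary and self-contained. You instead invoke a stochastic Fubini theorem for stable integrals under the Minkowski-type mixed-norm hypothesis, verified via Theorem~\ref{ex1}; this is legitimate and buys independence from the LePage construction (it applies to any version of the integral), but note a caveat at the endpoint: for $\alpha=1$ the $L^1$ sample-path integrability criterion in \cite{samor-taqqu} carries a logarithmic correction (a symmetric $1$-stable variable has infinite first moment), so the hypothesis as you state it is not literally the right one there --- though the corrected condition also holds in this setting, since $\iint_{\R^2}|G(x,y,t)|^\alpha\,dy$ is bounded on the compact support of $\theta$. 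Both routes explain Remark~\ref{a} in the same way: the null set depends on $\theta$, in your case through the Fubini theorem's integrand, in the paper's through the conditional Kolmogorov argument run for a fixed $\theta$.
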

\begin{zau}\label{a}
In the second part of this theorem, the exceptional event of
probability zero may depend on $\theta$.
\end{zau}
\begin{proof}

Write the LePage representation for the left-hand side of Eq.~\eqref{uv}:
\begin{gather*}
L(\theta)=C_\alpha\sum_{k=1}^\infty\varGamma_k^{-{1}/{\alpha}}K(x,\xi
_k,t)g_k,\\
K(x,y,t)=\varphi(\xi_k)^{-1/\alpha}\int_0^{\infty}\iint_{\R^2}
G(x,y,t)\Big(\frac{\partial^2}{\partial t^2}\theta(x,t)-a^2\varDelta
\theta(x,t)\Big)\, dx\,dt
\end{gather*}
and its right-hand side
\begin{gather*}
R(\theta)=C_\alpha\int_0^{\infty}\sum_{k=1}^\infty\varGamma
_k^{-{1}/{\alpha}}\varphi(\xi_k)^{-1/\alpha}\theta(\xi_k,t)\sigma(\xi
_k,t)g_k\, dt.
\end{gather*}
The proof consists of two steps: showing the convergence of the LePage
series and then proving that \eqref{uv} holds for partial sums of the
LePage series.

Let us estimate the terms in the series for $L(\theta)$. Assume that
$\operatorname{supp} \theta\subset[0,R]\times B(0,R)$. Then, denoting
$\psi(x,t) = \frac{\partial^2}{\partial t^2}\theta(x,t)-a^2\varDelta
\theta(x,t)$, we have
%
%
\begin{align}\label{k}
\big|K(x,\xi_k,t)\big|&\,{\leq}\int_0^R\iint_{B(0,R)}\big|\varphi(\xi_k)\big|^{-1/\alpha}\big|G(x,\xi_k,t)\big|\big|\psi(x,t)\big|\,dx\,dt\nonumber\\
&\,{\leq}\Big(\inf_{B(0,R)}\big|\varphi(x)\big|\Big)^{-1/\alpha}\nonumber\\
& \quad \times \sup_{x\in\R^2,t\ge0}\big|\psi(x,t)\big| \int_0^R\!\iint_{B(0,R)}\big|G(x,\xi_k,t)\big|\,dx\,dt\nonumber\\
&\,{\leq}\, C_{R,\theta}.
\end{align}
%

Consider first the case $\alpha\in(0,1)$.
By the strong law of large numbers and well-known properties of
Gaussian random variables there exists $\varOmega_0\in\mathcal F, \Prob
(\varOmega_0)=1$, such that, for all $\omega\in\varOmega_0$ and $k\geq1$,
\[
\varGamma_k\geq C_1(\omega)k,\qquad\left|g_k\right|\leq C_2(\omega)\left
(\log k+1\right),
\]
where $C_1, C_2$ are some positive random variables.
Therefore, the $k$th term in the series for $L(\theta)$ is bounded by
\begin{equation*} 
C_\alpha\varGamma_k^{-1/\alpha} \big|K(t,x,\xi_k)g_k\big|\leq C(\omega
)k^{-1/\alpha} \left|\log k+1\right|.
\end{equation*}
Consequently, the series for $L(\theta)$ is convergent for all $\omega
\in\varOmega_0$ and $\theta(x,t)\in\mathcal D(\R^2\times\R_{+})$.
Similarly, we can show the convergence of $R(\theta)$.

For $\alpha\in[1,2)$, the argument is changed slightly. Specifically,
we show the almost sure convergence of the series for $L(\theta)$ and
$R(\theta)$ for all $\theta(x,t)\in\mathcal D(\R^2\times\R_{+})$.
Indeed, for fixed $\omega_\xi\in\varOmega_\xi, \omega_\varGamma\in\varOmega
_\varGamma$, in view of \eqref{k},
we have
\[
\Expect_g\big[L(\theta)^2\big]=C_\alpha^2\sum_{k\geq1}\varGamma
_k^{-2/\alpha}K(x,\xi_k,t)^2\le C(\omega)\sum_{k\geq1}k^{-2/\alpha}
\]
almost surely. Therefore, by the Kolmogorov theorem the series for
$L(\theta)$ converges $\Prob_g$-almost surely for almost all $\omega_\xi
\in\varOmega_\xi$ and $\omega_\varGamma\in\varOmega_\varGamma$ and, therefore,
$\Prob$-almost surely. The almost sure convergence of $R(\theta)$ is
shown in a similar way.

Now we prove that Eq.~\eqref{uv} holds for partial sums of the LePage
series; the argument does not depend on the value of $\alpha$.
The counterpart of Eq.~\eqref{uv} for the partial sums reads
as\goodbreak
\begin{align*}
&C_\alpha\sum_{k=1}^N\varGamma_k^{-{1}/{\alpha}}\varphi(\xi_k)^{-1/\alpha}\\
&\qquad \times\int_0^{\infty}\iint_{\R^2}\int_0^t\frac{\sigma(\xi_k, \tau)}{\sqrt{a^2(t-\tau)^2-\left|y-\xi_k\right|^2}}\ind{\left|y-\xi_k\right|<a(t-\tau)}\psi(y,t)\,d\tau\, dy\,dt\\[2pt]
&\quad =C_\alpha\sum_{k=1}^N\varGamma_k^{-{1}/{\alpha}}\varphi(\xi_k)^{-1/\alpha}\int_0^{\infty}\theta(\xi_k, \tau)\sigma(\xi_k, \tau)\,d\tau.
\end{align*}
It suffices to show the equality of the corresponding terms, that is,\
to prove that, for any $x\in\R^2$,
\[
\int_0^{\infty}\int_\tau^\infty\iint_{B(x,a(t-\tau))}\frac{\sigma(y,\tau
)\psi(y,t)}{\sqrt{a^2(t-\tau)^2-\left|x-y\right|^2}}\,dy\,dt\,d\tau=\int
_0^\infty\sigma(x,\tau)\theta(x,\tau)\,d\tau.
\]
This equality, in turn, would follow if we show that
\begin{equation}\label{psi}
\int_\tau^\infty\iint_{B(x,a(t-\tau))}\frac{\psi(y,t)}{\sqrt{a^2(t-\tau
)^2-\left|x-y\right|^2}}\,dy\,dt=\theta(x,\tau)
\end{equation}
for all $\tau\geq0, y\in\R^2$.

As before, assume that $\operatorname{supp} \theta\subset[0,R]\times B(0,R)$.
Define
\[
\widetilde{\theta}(x,u)=\theta(x,R-u),\quad u\leq R.
\]
Then
\begin{gather*}
\frac{\partial^2}{\partial t^2}\theta(x,R-u)=\frac{\partial^2}{\partial t^2}\widetilde{\theta}(x,u);
\qquad
\widetilde{\theta}(x,0)=0;
\qquad
\frac{\partial}{\partial t}\widetilde{\theta}(x,0)=0;\\
\varDelta\theta(x,R-u)=\varDelta\widetilde{\theta}(x,u).
\end{gather*}
Consider the following Cauchy problem:
\begin{gather*}
\left\{
\begin{aligned}
&\bigg(\frac{\partial^{2}}{\partial t^2}- a^2 \varDelta\bigg)V\left
(x,t\right)=\frac{\partial^2}{\partial t^2}\widetilde\theta
(x,t)-a^2\varDelta\widetilde\theta(x,t),\\
&V(x,0)=0, \\
&\frac{\partial V(x,t)}{\partial t}|_{t=0}= 0,\\
\end{aligned}
\right.
\end{gather*}
Clearly, the function $\widetilde\theta$ is a solution. On the other
hand, by Poisson's formula, for all $x\in\R^2$ and $r\ge0$,
\[
\widetilde\theta(x,r)=\frac{1}{2\pi a}\int_0^r\iint_{B(x,a(r-s))}\frac
{\frac{\partial^2}{\partial t^2}\widetilde\theta(y,s)-a^2\varDelta
\widetilde\theta(y,s)}{\sqrt{a^2(r-s)^2-\left|x-y\right|^2}}\,dy\,ds.
\]
Changing the variables $r\to R-\tau$, $s\to R-t$ and noticing that $\psi
(t,x)$ vanishes for $\tau\ge R$, we get \eqref{psi} for all $t\in
[0,R]$. For $\tau\ge R$, the both sides of the equality are zero,
whence the proof follows.
\end{proof}

\subsection{Regularity of solution in time variable}\label{subsec:holder}
In this section, adapting the argument of \cite{kono-maejima}, we show
that the solution $U$ constructed by means of Poisson's formula \eqref
{eq-U} is H\"older continuous in the time variable. Since we have
already shown that $U$ is highly irregular in the spatial variable, our
findings for the planar wave equation are in a sharp contrast with the
scalar case, where the time and space regularity are the same.
\begin{theo}\label{teog1}
Assume {\rm(S1)--\rm(S3)}. Then for any $T>0$ and $x\in\R^2$ the
function $U(x,\cdot)$ is H\"older continuous on $[0,T]$ almost surely
with any exponent less than $\gamma\wedge\frac12$. This implies the
required statement.
\end{theo}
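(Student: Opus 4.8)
The plan is to exploit the product structure of the probability space. Fix $x\in\R^2$ and $T>0$ and condition on the Poisson arrivals $\omega_\varGamma$ and the sampling points $\omega_\xi$. For these frozen, the LePage representation
\[
U(x,t)=C_\alpha\sum_{k\ge1}\varGamma_k^{-1/\alpha}\varphi(\xi_k)^{-1/\alpha}G(x,\xi_k,t)\,g_k
\]
is, as a process in $t\in[0,T]$, a centered Gaussian process in the variables $\omega_g$. For such a process the regularity is governed by the canonical metric, so I will reduce the statement to a bound of the form
\[
\Expect_g\big[\big|U(x,t)-U(x,s)\big|^2\big]=C_\alpha^2\sum_{k\ge1}\varGamma_k^{-2/\alpha}\varphi(\xi_k)^{-2/\alpha}\big[G(x,\xi_k,t)-G(x,\xi_k,s)\big]^2\le K(\omega)\,|t-s|^{2(\gamma\wedge1/2)}
\]
with an a.s.\ finite $K$, uniformly over $0\le s\le t\le T$. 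Once this holds, the Gaussian moment identity $\Expect_g[|U(x,t)-U(x,s)|^p]=c_p\big(\Expect_g[|U(x,t)-U(x,s)|^2]\big)^{p/2}$ and Kolmogorov's continuity theorem give $\Prob_g$-a.s.\ H\"older continuity on $[0,T]$ with any exponent $<\gamma\wedge\frac12-\frac1p$; letting $p\to\infty$ and integrating over $\omega_\varGamma,\omega_\xi$ by Fubini yields the claim.

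The analytic heart is a uniform estimate for the increment $G(x,y,t)-G(x,y,s)$ expressed through $\rho:=|x-y|$ alone. Substituting $a(t-\tau)=\rho\cosh w$ brings $G$ to the clean form $G(x,y,t)=\frac1{2\pi a^2}\int_0^{w_t}\sigma\big(y,t-\frac\rho a\cosh w\big)\,dw$ with $w_t=\operatorname{arcosh}(at/\rho)$, and $G=0$ for $\rho\ge at$. For $t>s$ I will compare the two integrals and split the increment into a part over the common range $0\le w\le w_s$ and a part over the moving range $w_s\le w\le w_t$. On the common range, (S3) produces a factor $|t-s|^\gamma$ times $w_s\le\operatorname{arcosh}(aT/\rho)$, contributing $C|t-s|^{2\gamma}\varPhi_1(\rho)$ with $\varPhi_1(\rho)=\big(\operatorname{arcosh}(aT/\rho)\big)^2\mathbf 1_{\{\rho<aT\}}$, a compactly supported, locally $(\log)^2$ profile that is integrable exactly as in Theorem~\ref{ex1}. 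On the remaining range, (S1) bounds the increment by $C(w_t-w_s)$, and the key is the elementary inequality $\operatorname{arcosh}(v)-\operatorname{arcosh}(u)\le 2\sqrt{v-u}$ for $v\ge u\ge1$ (from $\frac1{\sqrt{r^2-1}}\le\frac1{\sqrt{r-1}}$), which gives $(w_t-w_s)^2\le 4a\,|t-s|/\rho$; the pure boundary case $as\le\rho<at$ is handled identically and yields the same bound. Altogether, uniformly in $0\le s\le t\le T$,
\[
\big[G(x,y,t)-G(x,y,s)\big]^2\le C\,|t-s|^{2\gamma}\varPhi_1(\rho)+C\,|t-s|\,\varPhi_2(\rho),\qquad \varPhi_2(\rho)=\rho^{-1}\mathbf 1_{\{\rho<aT\}}.
\]

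Because the profiles $\varPhi_1,\varPhi_2$ are fixed, $s$-independent, integrable, and compactly supported, the randomness factors out. Substituting into the series gives
\[
\Expect_g\big[\big|U(x,t)-U(x,s)\big|^2\big]\le C\,|t-s|^{2\gamma}K_1(\omega)+C\,|t-s|\,K_2(\omega),\qquad K_i(\omega)=\sum_{k\ge1}\varGamma_k^{-2/\alpha}\varphi(\xi_k)^{-2/\alpha}\varPhi_i(\xi_k).
\]
Each $K_i$ is a.s.\ finite by the same reasoning used for the convergence of the LePage series: the finitely many initial terms are a.s.\ finite, while the tail $\sum_{k\ge k_0}$ with $k_0>2/\alpha$ has finite expectation $\sum_{k\ge k_0}\Expect[\varGamma_k^{-2/\alpha}]\int\varphi^{1-2/\alpha}\varPhi_i<\infty$, since $\sum_k\Expect[\varGamma_k^{-2/\alpha}]\sim\sum_k k^{-2/\alpha}$ converges for $\alpha<2$ and, on the disc $\{|z-x|<aT\}$, $\varphi$ is bounded below while $\int\varPhi_i<\infty$. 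As $|t-s|\le T$, both powers are dominated by $|t-s|^{2(\gamma\wedge1/2)}$, producing the desired bound with $K(\omega)=C_T(K_1+K_2)$.

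The main obstacle is the second paragraph: obtaining an increment estimate that is uniform in $s,t\in[0,T]$ and carried by $\rho$-only profiles. The difficulty is that the new region $B(x,at)\setminus B(x,as)$ and the light-cone singularity move with $s$, so a naive estimate would involve $s$-dependent weights. The substitution $a(t-\tau)=\rho\cosh w$ together with the $\tfrac12$-H\"older behaviour of $\operatorname{arcosh}$ near $1$ is what simultaneously removes this $s$-dependence and generates the exponent $\tfrac12$, whereas the exponent $\gamma$ arises from the time regularity (S3) of $\sigma$; the achievable H\"older exponent is therefore the smaller of the two, $\gamma\wedge\tfrac12$.
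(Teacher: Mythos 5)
Your proof is correct, and its analytic core is the same as the paper's: you freeze $(\omega_\varGamma,\omega_\xi)$ so that $U(x,\cdot)$ becomes a centered Gaussian process in $\omega_g$, and you bound the increment of $G$ by splitting it into the part over the common integration range, where (S3) produces the factor $|t-s|^\gamma$, and the part over the range swept out near the light cone, where (S1) and the square-root geometry of the cone produce $|t-s|^{1/2}$ --- precisely the paper's decomposition into $I_1$, $I_2$ plus the boundary case $at_1<|x-y|<at_2$. Your substitution $a(t-\tau)=\rho\cosh w$ is a tidier way of running the same computation (and incidentally gives a sharper, logarithmic profile $\varPhi_1$ for the (S3) term, where the paper settles for $\rho^{-1/2}$), but it is not a different idea. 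The genuine divergence is in the probabilistic wrap-up. The paper works with the supremum function $g(h)$ of squared increments over $|t_1-t_2|<h$, proves only the in-mean bound $\Expect_\xi[g(h)]\le C(\omega_\varGamma)h^{2\beta}$, and must then run the dyadic argument adapted from K\^ono--Maejima (summability of $2^{n\beta}n^{-1-\eta}g(2^{-n})$, monotonicity of $g$ and the scaling property of $x^{-2\beta}|\ln x|^{1+\eta}$) to convert it into an almost sure modulus, finishing with the Gaussian modulus-of-continuity estimate. You avoid this machinery entirely: since your increment bound is carried by fixed, $(s,t)$-independent, integrable, compactly supported profiles, the random constants $K_i(\omega)$ are a.s.\ finite once and for all (finite expectation of the tail $k>2/\alpha$, a.s.\ finiteness of the initial terms), so the canonical-metric bound holds uniformly in $(s,t)$ with a single a.s.\ finite constant, and conditional Gaussian moment equivalence plus Kolmogorov's criterion with $p\to\infty$ conclude. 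Your route is more elementary and self-contained; the paper's route yields the marginally sharper modulus $|t_1-t_2|^{\beta}\big|\ln|t_2-t_1|+1\big|^{1+\eta/2}$, though both deliver exactly the stated conclusion (every exponent below $\gamma\wedge\tfrac12$). One caveat, which you share with the paper: Kolmogorov's theorem, like the Gaussian modulus bound, a priori produces a H\"older continuous \emph{modification}; to conclude that $U(x,\cdot)$ itself is H\"older one should note that the conditionally Gaussian series has continuous partial sums and invoke the It\^o--Nisio theorem to upgrade pointwise a.s.\ convergence to uniform convergence, a point glossed over in both arguments.
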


\begin{proof}

Take some $h>0$ and $t\in[0,T-h]$. For fixed $\omega_\xi\in\varOmega_\xi$
and $\omega_\varGamma\in\varOmega_\varGamma$, we have the estimate
\begin{align*}
&\Expect_g\big[\big(U_1(x,t)-U_1(x,t+h)\big)^2\big]\\
&\quad =C_\alpha^2\sum_{k\geq1}\varGamma_k^{-2/\alpha}\varphi(\xi_k)^{-2/\alpha}\big|G(x,\xi_k,t_2)-G(x,\xi_k,t_2)\big|^2\leq g(h),
\end{align*}
where\vspace{-6pt}
\begin{gather*}
g(h)=C_\alpha^2\sum_{k\geq1}\varGamma_k^{-2/\alpha}\varphi
(\xi_k)^{-2/\alpha}\sup_{\substack{
t_1, t_2 \in[0,T] \\
0<t_1-t_2<h
}}
\big|G(x,\xi_k,t_1)-G(x,\xi_k,t_2)\big|^2.
\end{gather*}
Let $t_1,t_2\in[0,T]$ be such that $0<t_2-t_1<h$. Write
\begin{align*}
&\big|G(x,y,t_1)-G(x,y,t_2)\big|^2\\
&\quad =\bigg|\int_0^{t_1-\frac{\left|x-y\right|}{a}}\frac{\sigma(y,\tau)}{\sqrt{a^2(t_1-\tau)^2-|x-y|^2}}\,d\tau\,\ind{\left|x-y\right|<at_1}\\[2pt]
&\qquad -\int_0^{t_2-\frac{\left|x-y\right|}{a}}\frac{\sigma(y,\tau)}{\sqrt{a^2(t_2-\tau)^2-|x-y|^2}}\,d\tau\,\ind{\left|x-y\right|<at_2}\bigg|^2.
\end{align*}
Consider first the case $|x-y|<at_1$. Changes of variable
$\tau=t_1-s$ and $\tau=t_2-s$ in the first and second integrals,
respectively, give
\begin{align*}
&\bigg|\int_{\frac{\left|x-y\right|}{a}}^{t_1}\frac{\sigma(y,t_1-s)\,ds}{\sqrt{a^2s^2-|x-y|^2}}-\int_{\frac{\left|x-y\right|}{a}}^{t_2}\frac{\sigma(y,t_2-s)\,ds}{\sqrt{a^2s^2-|x-y|^2}}\bigg|\\[4pt]
&\quad \leq\bigg|\int_{\frac{\left|x-y\right|}{a}}^{t_1}\frac{\sigma(y,t_1-s)-\sigma(y,t_2-s)}{\sqrt{a^2s^2-|x-y|^2}}\,ds\bigg|+\bigg|\int_{t_1}^{t_2}\frac{\sigma(y,t_1-s)\,ds}{\sqrt{a^2s^2-|x-y|^2}}\bigg|=:I_1+I_2.
\end{align*}
Taking into account (S3), we have
\begin{align*}
I_1&\leq\int_{\frac{\left|x-y\right|}{a}}^{t_1}\frac{\left|\sigma(y,t_1-s)-\sigma(y,t_2-s)\right|}{\sqrt{a^2s^2-|x-y|^2}}\,ds\\
&\leq\left|t_1-t_2\right|^\gamma\int_{\frac{\left|x-y\right|}{a}}^{t_1}\frac{ds}{\sqrt{a^2s^2-\left|x-y\right|^2}}\\
&\leq C\left|t_1-t_2\right|^\gamma\int_{\frac{\left|x-y\right|}{a}}^{t_1}\frac{ds}{\sqrt{(as-\left|x-y\right|)(as+\left|x-y\right|)}} \\
&\leq\frac{C\left|t_1-t_2\right|^\gamma\sqrt{at_1-\left|x-y\right|}}{\sqrt{\left|x-y\right|}}\leq\frac{C\left|t_1-t_2\right|^{\gamma}}{\sqrt{\left|x-y\right|}};\\
I_2&\leq C\int_{t_1}^{t_2}\frac{ds}{\sqrt{a^2s^2-\left|x-y\right|^2}}\leq C\int_{t_1}^{t_2}\xch{\frac{ds}{\sqrt{\left(as-\left|x-y\right|\right)\left(as+\left|x-y\right|\right)}}}{\frac{ds}{\sqrt{\left(as-\left|x-y\right|\right)\left(as+\left|x-y\right|\right)}}\leq}\\
&\leq\frac{2C}{a\sqrt{\left|x-y\right|}}\big(\sqrt{at_2-\left|x-y\right|}-\sqrt{at_1-\left|x-y\right|}\big)\leq\frac{C\sqrt{t_2-t_1}}{\sqrt{\left|x-y\right|}}.
\end{align*}
Now let $at_1<|x-y|<at_2$. In this case,
\begin{align*}
&\big|G(t_1,x,y)-G(t_2,x,y)\big| \\
&\quad =\bigg|\int_0^{t_2-\frac{\left|x-y\right|}{a}}\frac{\sigma(y,\tau)}{\sqrt{a^2(t_2-\tau)^2-|x-y|^2}}\,d\tau\ind{\left|x-y\right|<at_2}\bigg| \\
&\quad \leq C \int_0^{t_2-\frac{\left|x-y\right|}{a}}\frac{ \ind{\left|x-y\right|<aT}\, d\tau}{\sqrt{a^2(t_2-\tau)^2-\left|x-y\right|^2}}\\
&\quad \leq C \int_0^{t_2-\frac{\left|x-y\right|}{a}}\frac{\ind{\left|x-y\right|<aT}\,d\tau}{\sqrt{(a(t_2-\tau)-\left|x-y\right|)(a(t_2-\tau)+\left|x-y\right|)}}\\
&\quad \leq\frac{C}{\sqrt{\left|x-y\right|}}\int_0^{t_2-\frac{\left|x-y\right|}{a}}\frac{\ind{\left|x-y\right|<aT}\,d\tau}{\sqrt{(a(t_2-\tau)-\left|x-y\right|)}} \\
&\quad \leq\frac{C\sqrt{at_2-\left|x-y\right|}}{\sqrt{\left|x-y\right|}} \ind{\left|x-y\right|<aT} \leq\frac{C\sqrt{h}}{\sqrt{\left|x-y\right|}} \ind{\left|x-y\right|<aT}.
\end{align*}
Combining the estimates, we get
\[
g(h)\leq C h^{2\beta}\sum_{k\geq1}\varGamma_k^{-2/\alpha}\varphi(\xi
_k)^{-2/\alpha}\frac{1}{\left|x-\xi_k\right|}\ind{\left|x-\xi_k\right|<aT},
\]
where $\beta= \gamma\wedge\frac12$.
Taking the expectation w.r.t.\ $\xi$, we obtain
\begin{align*}
\Expect_\xi\big[g(h)\big]&\leq C h^{2\beta}\sum_{k\geq1}\varGamma_k^{-2/\alpha}\Expect_\xi\bigg[\frac{\varphi(\xi_k)^{-2/\alpha}}{\left|x-\xi_k\right|}\ind{\left|x-\xi_k\right|<aT}\bigg]\\
&\leq C h^{2\beta}\sum_{k\geq1}\varGamma_k^{-2/\alpha}\inf_{B(x,aT)}\varphi(y)^{-2/\alpha}\iint_{B(x,aT)}\frac{1}{\left|x-y\right|}\,dy\le C(\omega_\varGamma) h^{2\beta}.
\end{align*}
Hence, for any $\eta>0$,
\[
\Expect_\xi\Bigg[\sum_{n=1}^{\infty}\frac{2^{n\beta}}{n^{1+\eta
}}g\big(2^{-n}\big)\Bigg]\leq C(\omega_\varGamma)\sum_{n=1}^{\infty}n^{-1-\eta
}<\infty.
\]
Therefore,
\[
\sum_{n=1}^{\infty}\frac{2^{n\beta}g(2^{-n})}{n^{1+\eta}}<\infty
\]
$\Prob_\xi\otimes\Prob_\varGamma$-almost surely. In particular, $ \frac
{2^{n\beta}g(2^{-n})}{n^{1+\eta}}\rightarrow0$, $n\rightarrow\infty$,
$\Prob_\xi\otimes\Prob_\varGamma$-almost surely. Since the function $g$ is
nondecreasing and the function $f(x) = x^{-2\beta}|\ln x|^{1+\eta}$
satisfies $f(2x)\le Cf(x)$ for $x$ small enough, the latter convergence implies
\[
\frac{g(h)}{h^{2\beta}\left|\ln h\right|^{1+\eta}}\rightarrow0,\quad
h\rightarrow0,
\]
$\Prob_\xi\otimes\Prob_\varGamma$-almost surely. Therefore, the inequality
\[
\Expect_g\big[\big(U(x,t_1)-U(x,t_2)\big)^2\big]\leq
(t_2-t_1)^{2\beta}\big|\ln(t_2-t_1)\big|^{1+\eta}\xch{,}{.}
\]
holds $\Prob_\xi\otimes\Prob_\varGamma$-almost surely for all $t_1,t_2\in
[0,T]$ close enough and such that $t_1<t_2$. Since $U$ has a centered
Gaussian distribution for fixed $\omega_\xi\in\varOmega_\xi$ and $\omega
_\varGamma\in\varOmega_\varGamma$, the last observation yields that
\[
\big|U(x,t_1)-U(x,t_2)\big|\leq C(\omega) \left|t_1-t_2\right|^{\beta
}\big|\ln\left|t_2-t_1\right|+1\big|^{1+\eta/2}
\]
$\Prob$-almost surely for all $t_1,t_2\in[0,T]$ close enough.
\end{proof}
\bibliographystyle{vmsta-mathphys}
%

\end{document}